\documentclass[12pt]{article}

\usepackage{amsmath,amssymb,amsfonts}
\usepackage{amsthm}%
\usepackage{enumitem}
\usepackage{graphicx}
\usepackage[T1]{fontenc}
\usepackage{mathscinet}
\usepackage{mathrsfs}
\usepackage{enumerate}

\allowdisplaybreaks%

{\theoremstyle{plain}%
 \newtheorem{theorem}{Theorem}
 
 \newtheorem{lemma}{Lemma}%
}
{\theoremstyle{remark}
\newtheorem{remark}{Remark}
}
{\theoremstyle{definition}

}

\begin{document}

\begin{center}
 {\large Group maps on initial intervals}

 \ 

{\sc John M. Campbell}

\vspace{0.1in}

{\footnotesize Department of Mathematics and Statistics}

{\footnotesize Dalhousie University}

{\footnotesize Halifax, NS B3H 4R2}

{\footnotesize Canada}

\vspace{0.1in}

{\footnotesize {\tt jh241966@dal.ca}}

\vspace{0.1in}

\end{center}

\begin{abstract}
 Let $S$ denote a $k$-subset of $\mathbb{Z}$. Let $G$ be a (multiplicative) group of size $k$. Assume that there exists a function 
 $\varphi\colon S \to G$ that is injective and that satisfies the following: If $x, y, z \in S$ and $z = xy$, then $\varphi(z) = \varphi(x) 
 \varphi(y)$. In 1990, Forcade and Pollington conjectured that if $S = \{ 1, 2, \ldots, k \}$, then $G$ is abelian. This has been a 
 long-standing conjecture highlighted by Richard K.\ Guy as an unsolved problem, and this problem appears to have remained open, 
 with its open status having been noted by Caicedo et al.\ in 2021 [\emph{Electron.\ J.\ Combin.}]. We succeed in proving the Forcade--Pollington conjecture in the affirmative and in full generality. 
\end{abstract}

\vspace{0.1in}

\noindent {\footnotesize \emph{MSC:} 20D60, 11B75, 05B15}

\vspace{0.1in}

\noindent {\footnotesize \emph{Keywords:} abelian group, group map, finite group}

\section{Introduction}
 There are many important and deep problems concerning combinatorial and number-theoretic aspects
 about finite groups. In this direction, the theory of logarithmic splittings of finite abelian groups
 often gives rise to rich areas of study, and similarly with respect to 
 discrete logarithms associated with power-residue characters. 
 These topics provide a foundation that led 
 Forcade and Pollington \cite{ForcadePollington1990}, in 1990, to provide a conjecture
 on what they refer to as \emph{group maps}, as defined below. 

 A set $S$ consisting of $k$ integers is said to be \emph{logarithmic} \cite{Guy1989}
 if there exists a group $G$ of size $k$ and an injective function
\begin{equation}\label{varphicolon} 
 \varphi\colon S \to G
\end{equation}
 such that: If $x, y, z \in S$ and $z = xy$, 
 then $\varphi(z) = \varphi(x) \varphi(y)$ (letting the binary operation on $G$ be denoted multiplicatively). 
 Forcade and Pollington \cite{ForcadePollington1990} conjectured that: 
 In the special case $S = \{ 1, 2, \ldots, k \}$, the group $G$ given as the codomain
 in \eqref{varphicolon} is necessarily abelian (cf.\ \cite{Chandler1988,ForcadeLamoreauxPollington1986} and Remark \ref{remarkChandler} below). 
 This conjecture was highlighted as an unsolved problem by Richard K.\ Guy
 \cite{Guy1989}, 
 and, based on extant research works citing Forcade and Pollington's work \cite{BlackburnMcKee2012,BosekDebskiGrytczukSokolSleszynskaNowakZelazny2018,CaicedoChartierPach2021,DebskiGrytczuk2021}, 
 it seems that Forcade and Pollington's 
 conjecture has remained open. 
 Moreover, its status as an open problem was highlighted as late as 2021 \cite{CaicedoChartierPach2021}. 
 In this paper, we succeed in proving Forcade and Pollington's conjecture 
 in the affirmative, based on our extensive interactions with GPT-5.6 Pro. 

 Prior to this paper, some partial progress on the 
 Forcade--Pollington conjecture has been made. As noted by 
 Guy \cite{Guy1989}, Chandler and Hickerson
 independently proved that if a group $G$ of the above form were noncommutative, 
 then its center would be of index $4$, $6$, or $8$, and Hickerson also showed that such a group 
 would be such that $|G| > 9696$. 
 Moreover, Chandler \cite{Chandler1988} proved that if a group of the above form is of odd size, 
 then it is necessarily abelian. 
 Such partial results motivate our full solution to the Forcade--Pollington conjecture, as
 in Section \ref{secfull} below. 

\begin{remark}\label{remarkChandler}
 In 1986, Forcade, Lamoreaux, and Pollington \cite{ForcadeLamoreauxPollington1986}
 conjectured that: For each positive integer $k$, there {exists} an abelian group $(G, \ast)$
 of order $k$ and a {surjective} map $f\colon \{ 1, 2, \ldots, k \} \to G$
 such that if $ij \leq k$, then $f(ij) = f(i) \ast f(j)$. 
 This is {not} the same as the conjecture attributed above to 
 Forcade and Pollington in 1990 \cite{ForcadePollington1990} (cf.\ \cite{Chandler1988,ForcadeLamoreauxPollington1986}). 
\end{remark}
 
\section{A full solution}\label{secfull}
 To avoid confusion in regard to the use of abelian/non-abelian operations, we let groups be denoted multiplicatively (since groups are 
 not assumed to be abelian, for our purposes). 
 We also let $e$ denote the (multiplicative) identity element of a given group. 

 We henceforth let $k$ be a positive integer and let $G$ be a group of size $k$, and we assume that 
 there is an injective mapping 
\begin{equation}\label{assumedmap} 
 \varphi\colon \{ 1, 2, \ldots, k \} \to G
\end{equation}
 such that 
\begin{equation}\label{preserves}
 \varphi(ab) = \varphi(a) \varphi(b)
\end{equation}
 whenever $a$, $b$, and $ab$ 
 are in $\{ 1, 2, \ldots, k \}$. 
 A mapping of this form is referred to as a \emph{group map}, 
 according to 
 Forcade and Pollington \cite{ForcadePollington1990}. 

 We begin with some basic but required consequences of the existence of 
 the group map indicated in \eqref{assumedmap}. 

\begin{lemma}\label{lemlocal}
 The equality $\varphi(1) = e$ holds, and if 
 $a$ and $b$ are in $\{ 1, 2, \ldots, k \}$ and satisfy $ab \leq k$, 
 then $\varphi(a) \varphi(b) = \varphi(b) \varphi(a) = \varphi(ab)$. 
\end{lemma}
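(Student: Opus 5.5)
The plan is to read both claims directly off the multiplicativity condition \eqref{preserves}, using only the cancellation law in $G$. No earlier result is needed beyond the definition of a group map in \eqref{assumedmap}.

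\emph{Identity.} Take $a = b = 1$. Then $a$, $b$, $ab$ all lie in $\{1, 2, \ldots, k\}$ (this uses $k \geq 1$), so \eqref{preserves} gives $\varphi(1) = \varphi(1)\varphi(1)$. Multiplying on the left by $\varphi(1)^{-1}$ yields $\varphi(1) = e$.

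\emph{Commutation.} Let $a, b \in \{1, 2, \ldots, k\}$ with $ab \leq k$. Since $a, b \geq 1$, the product $ab$ is a positive integer, hence lies in $\{1, 2, \ldots, k\}$. Applying \eqref{preserves} to the pair $(a, b)$ gives $\varphi(ab) = \varphi(a)\varphi(b)$. Because multiplication of integers is commutative, $ba = ab \leq k$, so \eqref{preserves} also applies to the pair $(b, a)$ and gives $\varphi(ba) = \varphi(b)\varphi(a)$. As $ba$ and $ab$ are the same integer, both sides equal $\varphi(ab)$, which gives the chain $\varphi(a)\varphi(b) = \varphi(b)\varphi(a) = \varphi(ab)$.

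There is no real obstacle here. The only point to keep explicit is that the hypothesis in \eqref{preserves} is symmetric in $a$ and $b$: it is stated for all admissible ordered pairs, so swapping $a$ and $b$ is legitimate, and it is this symmetry, not any property of $G$, that forces $\varphi(a)$ and $\varphi(b)$ to commute. Injectivity of $\varphi$ is not used.
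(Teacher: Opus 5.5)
Your proposal is correct and follows the paper's proof essentially verbatim: you take $a=b=1$ in \eqref{preserves} and cancel to get $\varphi(1)=e$, then apply \eqref{preserves} to both ordered pairs $(a,b)$ and $(b,a)$ and use $ab=ba$. Your side remarks (that $ab\geq 1$ places it in the domain, and that injectivity is not needed) are accurate and simply make explicit what the paper leaves implicit.
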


\begin{proof}
 From the relation in \eqref{preserves}, we find that $\varphi(1) = \varphi(1) \varphi(1)$, 
 and left-multiplication by $\big( \varphi(1) \big)^{-1}$ gives us that $\varphi(1) = e$. 

 Now, assume that $a b \leq k$. 
 The relation in \eqref{preserves} gives us that 
 $\varphi(a) \varphi(b) = \varphi(ab)$ and that $\varphi(b) \varphi(a) = \varphi(ba)$. 
 The members $a$ and $b$ in the domain in \eqref{assumedmap} 
 commute, with $\varphi(ab) = \varphi(ba)$, 
 and hence the equality of $\varphi(a) \varphi(b) = \varphi(b) \varphi(a)$. 
 \end{proof}

 Observe that $\varphi$ is necessarily bijective (since $\varphi$ is assumed to be injective and since
 $\{ 1, 2, \ldots, k \}$ and $G$ are of equal size), 
 but Lemma \ref{lemlocal} does not claim that \emph{each} pair of elements
 of $G$ commutes: Lemma \ref{lemlocal} proves that $\varphi(a)$ and $\varphi(b)$
 commute only with the additional assumption that $ab \leq k$. If $ab > k$, 
 then the definition of a group map does not, in any direct way, provide any relation
 between $\varphi(a) \varphi(b)$ and $\varphi(b) \varphi(a)$.

 We henceforward assume that $k \geq 2$, since we may disregard the trivial case given by groups of size $1$ (which 
 are necessarily abelian). 

 We write $t$ in place of the unique positive integer such that 
\begin{equation}\label{uniquepositive}
 t(t-1) \leq k < t(t+1)
\end{equation}
 with 
\begin{equation}\label{floorsqrt}
 t = \left\lfloor \frac{1 + \sqrt{1 + 4 k }}{2} \right\rfloor. 
\end{equation}

\begin{lemma}\label{lemsubabelian}
 The subgroup $\langle \varphi(1), \varphi(2), \ldots, \varphi(t) \rangle $ of $G$ is abelian. 
\end{lemma}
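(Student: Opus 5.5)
A group is abelian as soon as its generators pairwise commute. So the plan is to show that $\varphi(a)$ and $\varphi(b)$ commute for all $a, b \in \{1, 2, \ldots, t\}$. The tool will be Lemma \ref{lemlocal}: it gives commutation of $\varphi(a)$ and $\varphi(b)$ whenever $ab \leq k$. The whole task is therefore to check that the products of pairs of generators stay within $\{1, \ldots, k\}$, using the defining inequality \eqref{uniquepositive} for $t$.

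First, the case $a = b$ is trivial, since every element commutes with itself. For distinct $a, b \in \{1, \ldots, t\}$, we may assume $a < b \leq t$, so $a \leq t - 1$. Then
\[
ab \leq (t-1)t \leq k
\]
by \eqref{uniquepositive}. Lemma \ref{lemlocal} then yields $\varphi(a)\varphi(b) = \varphi(b)\varphi(a)$. We should also confirm that $t \leq k$, so that $\varphi(1), \ldots, \varphi(t)$ are defined. This holds because $k \geq 2$ forces $t \geq 2$, and then $t \leq t(t-1) \leq k$.

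Finally, we conclude with the standard fact that a group generated by pairwise commuting elements is abelian. To see this, note that the centralizer of each generator is a subgroup containing all the generators, hence containing the whole subgroup $\langle \varphi(1), \ldots, \varphi(t) \rangle$. So every generator is central in that subgroup. Therefore the centralizer of any element of the subgroup again contains all the generators, and thus the entire subgroup.

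The only point requiring care is that the square $t^2$ may exceed $k$. This is why the diagonal case must be handled separately rather than through Lemma \ref{lemlocal}, and it is exactly why $t$ is defined via $t(t-1)$ rather than $t^2$. No real obstacle is expected beyond this.
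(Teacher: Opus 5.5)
Your proposal is correct and follows the paper's proof: the diagonal case is trivial, distinct $i,j\le t$ give $ij\le t(t-1)\le k$ so Lemma~\ref{lemlocal} applies, and pairwise commuting generators generate an abelian group. Your extra checks (that $k\ge 2$ forces $t\ge 2$, hence $t\le t(t-1)\le k$ so the $\varphi(i)$ are defined, and the centralizer argument for the standard fact) are valid details that the paper leaves implicit.
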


\begin{proof} 
 Let $i$ and $j$ be members of $\{ 1, 2, \ldots, t \}$. 
 For the case whereby $i = j$, we find that $\varphi(i)$ 
 and $\varphi(j)$ commute, i.e., since a group element commutes with itself. 
 If $i \neq j$, then the largest possible product of two unequal elements
 of $\{ 1, 2, \ldots, t \}$ is $t(t-1)$, 
 so that $i j \leq t(t-1) \leq k$. 
 From Lemma \ref{lemlocal}, 
 we find that $\varphi(i) \varphi(j) = \varphi(j) \varphi(i)$. 
 A group generated by pairwise commuting elements is abelian, and hence the desired result. 
\end{proof}

 Our strategy, at this point, is to show that 
 $\langle \varphi(1), \varphi(2), \ldots, \varphi(t) \rangle $ 
 cannot be a proper subgroup of $G$. 

 Define
 $ \mathcal{R}_{t}
 = \{ (a, b) \in \mathbb{Z}^{2} : 1 \leq a \leq t, 1 \leq b \leq t-1, 
 \gcd(a, b) = 1 \}$, 
 and define
\begin{equation}\label{Thetatcolon}
 \Theta_t\colon \mathcal{R}_{t} \to \langle \varphi(1), \varphi(2), \ldots, \varphi(t) \rangle
\end{equation}
 so that $\Theta_t(a, b) = \varphi(a) \big( \varphi(b) \big)^{-1}$
 for a pair $(a, b)$ in $\mathcal{R}_{t}$. 
 Observe that since $a, b \leq t$ for a pair $(a, b) \in \mathcal{R}_{t}$, each expression 
 of the form $\Theta_t(a, b)$ is indeed in the codomain 
 indicated in \eqref{Thetatcolon}. 

\begin{lemma}\label{lemThetat}
 The mapping $\Theta_t$ is injective. 
\end{lemma}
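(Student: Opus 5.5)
Suppose $\Theta_t(a,b) = \Theta_t(c,d)$ for two pairs $(a,b)$ and $(c,d)$ in $\mathcal{R}_{t}$. Then $\varphi(a)\big(\varphi(b)\big)^{-1} = \varphi(c)\big(\varphi(d)\big)^{-1}$. The plan is to turn this relation between quotients into an equality of two values of $\varphi$ on integers at most $k$. Injectivity of $\varphi$ then gives the integer identity $ad = bc$, and coprimality finishes the argument.

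First, all four elements $\varphi(a), \varphi(b), \varphi(c), \varphi(d)$ lie in $\langle \varphi(1), \ldots, \varphi(t) \rangle$, which is abelian by Lemma \ref{lemsubabelian}. We may therefore rearrange freely. Multiplying the relation on the right by $\varphi(b)\varphi(d)$ yields
\begin{equation*}
\varphi(a)\varphi(d) = \varphi(c)\varphi(b).
\end{equation*}

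Next we want to apply Lemma \ref{lemlocal}, which requires $ad \leq k$ and $bc \leq k$. Since $a \leq t$ and $d \leq t-1$, we have $ad \leq t(t-1) \leq k$ by \eqref{uniquepositive}. Likewise $cb \leq t(t-1) \leq k$. This bound is the one delicate point, and it is exactly why the second coordinate is restricted to $b \leq t-1$ in the definition of $\mathcal{R}_{t}$. Lemma \ref{lemlocal} then gives $\varphi(ad) = \varphi(a)\varphi(d) = \varphi(c)\varphi(b) = \varphi(bc)$. Since $\varphi$ is injective, $ad = bc$.

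Finally, from $ad = bc$ and $\gcd(a,b)=1$ we get that $b$ divides $ad$, hence $b \mid d$. Symmetrically, $\gcd(c,d)=1$ and $d \mid bc$ give $d \mid b$. Since $b$ and $d$ are positive, $b = d$, and cancelling $b$ in $ad = bc$ gives $a = c$. Thus $(a,b) = (c,d)$, and $\Theta_t$ is injective.
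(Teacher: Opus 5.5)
Your proof is correct and follows the paper's argument essentially step for step. You use commutativity in the abelian subgroup to get $\varphi(a)\varphi(d)=\varphi(c)\varphi(b)$, the bounds $ad, cb \leq t(t-1) \leq k$ together with injectivity of $\varphi$ to get $ad = bc$, and then coprimality. The only cosmetic difference is that you deduce $b = d$ first via $b \mid d$ and $d \mid b$, whereas the paper deduces $a = c$ first via $a \mid c$ and $c \mid a$.
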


\begin{proof}
 Assume that $\Theta_t(a, b) = \Theta_t(c, d)$
 for pairs $(a, b)$ and $(c, d)$ in the domain shown in 
 \eqref{Thetatcolon}. 
 We thus find that 
\begin{equation}\label{frominjective}
 \varphi(a) \big( \varphi(b) \big)^{-1} = \varphi(c) \big( \varphi(d) \big)^{-1}. 
\end{equation} 
 Each element among $\varphi(a)$, $\varphi(b)$, $\varphi(c)$, and $\varphi(d)$
 is in the abelian group $\langle \varphi(1), \varphi(2), \ldots, \varphi(t) \rangle$. 
 So, from 
 \eqref{frominjective}, we write 
\begin{equation}\label{rearrangeinjective} 
 \varphi(a) \varphi(d) = \varphi(c) \varphi(b). 
\end{equation} 
 Now, since $a, c \leq t$ and $b, d \leq t - 1$, 
 we find that 
 $ad \leq t(t-1) \leq k$ and that $cb \leq t(t-1) \leq k$. 
 From the assumption that $\varphi$ is a group map, 
 we find that $\varphi(a) \varphi(d) = \varphi(ad)$ and that $\varphi(c) \varphi(b) = \varphi(cb)$. 
 From \eqref{rearrangeinjective}, we thus find that 
 $\varphi(ad) = \varphi(cb)$, and the assumption that $\varphi$ is injective
 then gives us that 
\begin{equation}\label{adequalscb}
 ad = cb.
\end{equation} 
 Moreover, since $(a, b)$ and $(c, d)$ are in $\mathcal{R}_{t}$, we find that 
 $\gcd(a, b) = \gcd(c, d) = 1$. 
 The equality in \eqref{adequalscb} then gives us that $a \mid c b$. 
 Moreover, since $a$ and $b$ are relatively prime, 
 we can conclude that $a \mid c$. A symmetric argument
 then gives us that $c \mid a$. Since $a$ and $c$ are positive integers, 
 we thus find that $a = c$. Consequently, the relation in \eqref{adequalscb}
 allows us to deduce that $b = d$, 
 i.e., so that $(a, b) = (c, d)$, as desired. 
\end{proof}

\begin{lemma}\label{lemtriangular}
 For each integer $t \geq 5$, the relation 
 $ \left| \mathcal{R}_{t} \right| \geq \frac{t(t+1)}{2} $ 
 holds. 
\end{lemma}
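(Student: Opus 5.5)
The plan is to count the complement. Write $N_t = t(t-1)$ for the total number of pairs $(a,b)$ with $1 \leq a \leq t$ and $1 \leq b \leq t-1$. A pair fails to lie in $\mathcal{R}_t$ exactly when some prime $p$ divides both $a$ and $b$. For a fixed prime $p$, the number of such pairs is $\lfloor t/p \rfloor \lfloor (t-1)/p \rfloor \leq t(t-1)/p^2$. A union bound over primes then gives
\begin{equation*}
 \left| \mathcal{R}_t \right| \geq t(t-1) \Big( 1 - \sum_{p} \frac{1}{p^2} \Big),
\end{equation*}
where the sum runs over all primes $p$ (only $p \leq t-1$ actually contribute).

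Next I would obtain an explicit upper bound for the prime zeta value $\sum_p p^{-2}$. Keep the terms $1/4 + 1/9 + 1/25$ exactly. For all remaining primes, bound them by odd integers $n \geq 7$, and use $\sum_{n \text{ odd},\, n \geq 7} n^{-2} \leq \frac{1}{2}\int_{5}^{\infty} x^{-2}\,dx = \frac{1}{10}$. This gives roughly $0.401 + 0.1 \approx 0.50$, which is not quite good enough. So I would keep a few more primes explicitly ($7, 11, 13$) and bound the tail over odd $n \geq 15$ by $\frac{1}{2}\int_{13}^{\infty} x^{-2}\,dx = 1/26$. This should yield $\sum_p p^{-2} \leq 0.46$, comfortably below the true value $\approx 0.4522$ plus slack.

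With $c = 0.54$, the inequality $c\, t(t-1) \geq t(t+1)/2$ is equivalent to $(2c-1)t \geq 2c+1$. That is $0.08\,t \geq 2.08$, which holds for $t \geq 26$. So the analytic argument covers $t \geq 26$.

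The remaining cases $5 \leq t \leq 25$ are the main obstacle. The union bound is too lossy there, and the bound is genuinely tight at $t=5$: one finds $|\mathcal{R}_5| = 5+3+4+3 = 15 = 5\cdot 6/2$. I would handle these by direct computation, writing $|\mathcal{R}_t| = \sum_{b=1}^{t-1} \#\{a \leq t : \gcd(a,b)=1\}$. Since $|\mathcal{R}_{t+1}| - |\mathcal{R}_t|$ is easily tabulated, a short table suffices. Its increments are $\#\{b \leq t-1 : \gcd(t+1,b)=1\}$ plus $\#\{a \leq t+1 : \gcd(a,t)=1\}$, to be compared against the required increment $t+1$.

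To shorten the finite check, I could alternatively use the exact floors $\lfloor t/p\rfloor\lfloor (t-1)/p\rfloor$ together with inclusion–exclusion for $p \in \{2,3\}$, adding back the pairs divisible by $6$. This would lower the threshold. Either way, the small range reduces to a routine verification.
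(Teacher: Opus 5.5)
Your route differs from the paper's and can be made to work, but the numerical step fails as written. The terms you keep explicitly already give $\frac14+\frac19+\frac1{25}+\frac1{49}+\frac1{121}+\frac1{169}\approx 0.4357$. Adding your tail bound $\frac1{26}\approx 0.0385$ yields about $0.4742$, not $0.46$. With $c\approx 0.5258$, the condition $(2c-1)t\ge 2c+1$ holds only for $t\ge 40$. So either your finite range must be $5\le t\le 39$, or you need a sharper tail. For instance, keeping all primes up to $47$ and bounding the rest by $\frac12\int_{51}^\infty x^{-2}\,dx=\frac1{102}$ gives about $0.4582$, hence the analytic argument covers $t\ge 25$.

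Second, the finite verification is asserted rather than done, and it carries real content: the bound is an equality at both $t=5$ and $t=6$ (since $|\mathcal{R}_6|=21$). Write $\phi$ for Euler's totient (not the group map $\varphi$). Your increment $|\mathcal{R}_{t+1}|-|\mathcal{R}_t|$ equals $\phi(t)+\phi(t+1)$, and this can fall below the required $t+1$: from $t=14$ to $t=15$ it is $\phi(14)+\phi(15)=14<15$. So a step-by-step comparison of increments does not suffice; you must compare cumulative totals. This is easiest via $|\mathcal{R}_t|=2\sum_{j\le t}\phi(j)-1-\phi(t)$, with which the table is quick and the check does succeed.

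For comparison, the paper avoids both analysis and computation by building an explicit injection $\Psi_t\colon\Delta_t\to\mathcal{R}_t$ from the triangle $\{1\le x\le y\le t\}$:
\begin{itemize}
\item It sends $(x,y)=(\delta r,\delta s)$, with $\delta=\gcd(x,y)$, to $(u,v)=(r+(\delta-1)s,\,s)$.
\item This is injective because $\delta-1$ and $r-1$ are the quotient and remainder of $u-1$ upon division by $v$.
\item Images with $v=t$ are then transposed to $(t,u)$, and $(t,t)$ is sent to $(t-1,t-2)$.
\item The hypothesis $t\ge 5$ is used only to rule out a collision with $(t-1,t-2)$.
\end{itemize}
That argument is uniform in $t$ and needs no table. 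Yours explains \emph{why} the inequality should hold: coprime pairs have density $6/\pi^2>\frac12$, so the slack grows quadratically. The price is explicit prime-zeta estimates and a finite check whose length depends on the quality of your tail bound.
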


\begin{proof}
 Define 
\begin{equation}\label{defineDeltat}
 \Delta_{t} := \{ (x, y) \in \mathbb{Z}^{2} : 1 \leq x \leq y \leq t \}. 
\end{equation}
 We then find that $|\Delta_{t}| = \frac{t(t+1)}{2}$. 
 Our strategy, at this point, is to construct an injective map
 from $\Delta_{t} \to \mathcal{R}_{t}$. 

 For a pair $(x, y)$ in $\Delta_t$, write $\delta = \gcd(x, y)$ and 
\begin{equation}\label{xandy}
 x = \delta r \quad \text{and} \quad y = \delta s,
\end{equation} 
 where 
\begin{equation}\label{gcdrs1}
 \gcd(r, s) = 1
\end{equation}
and $1 \leq r \leq s$. 
 Now, define
\begin{equation}\label{originalFt}
 F_t(x, y) = \big( r + (\delta - 1) s, s \big) 
\end{equation}
 and write 
\begin{equation}\label{rewriteFtxy}
 F_t(x, y) = (u, v). 
\end{equation} 
 We thus have that $v = s$ and that $1 \leq v = s \leq y \leq t$. 
 Since $r \leq s$, we have that 
\begin{equation}\label{fromrleqs}
 u = r + (\delta - 1) s \leq s +(\delta - 1) s = \delta s = y \leq t. 
\end{equation} 
 Observe that $ \gcd(u, v) = \gcd(r + (\delta - 1) s, s ) = \gcd(r, s) = 1$. 

Since
\begin{equation}\label{displayEuclidean}
u-1=(\delta-1)v+(r-1)
\end{equation}
and
$
0\leq r-1<v,
$
 the relation in \eqref{displayEuclidean} gives 
 the Euclidean division of $u-1$ by $v$. Hence the
quotient and remainder are $\delta-1$ and $r-1$, respectively. 
In particular, we have that 
\begin{equation}\label{quotientremainder}
\delta =
1+\left\lfloor\frac{u-1}{v}\right\rfloor
\qquad\text{and}\qquad
r=u-(\delta-1)v.
\end{equation}

Moreover, $s=v$. Thus the pair $(u,v)$ uniquely determines
$
x=\delta r$ and 
 $y=\delta s=\delta v$. As a consequence, we have that $F_t$ is injective.

 Now, for an ordered pair $(x, y)$ in $\Delta_t$, define 
\begin{equation}\label{definePsit}
 \Psi_{t}(x, y) = \begin{cases} 
 (t-1, t-2), & \text{if $(x, y) = (t, t)$,} \\
 (t, u), & \text{if $v=t$,} \\
 (u, v), & \text{otherwise,} 
 \end{cases}
\end{equation}
 being consistent with the notation in \eqref{rewriteFtxy}. 

 We claim that each expression of the form $ \Psi_{t}(x, y)$ is in $\mathcal{R}_{t}$. 
 To begin with, if $(x, y) = (t, t)$, 
 then, since $1 \leq t - 2 \leq t- 1$ and $t - 1 \leq t$ and $\gcd(t-1, t-2) = 1$, 
 we find that $(t-1,t-2) \in \mathcal{R}_{t}$. 
 Now, if $v = t$, then, since the latter entry of $F_{t}(x, y)$ is $s$
 (recalling \eqref{originalFt}), we find that $s = t$, 
 but 
 $y = \delta s = \delta t \leq t$, and hence the equality $\delta = 1$, giving us that $y = t$. 
 So, we find that $u = r = x$ and that $\gcd(u, t) = 1$. 
 Since $1 \leq u \leq t$ and $\gcd(u, t) = 1$, it cannot be the case that $u$ equals $t$, 
 because, otherwise, we would have that 
 $\gcd(u, t) = \gcd(t, t) = t > 1$. We thus find that $1 \leq u \leq t - 1$, 
 so that $(t, u) \in \mathcal{R}_{t}$.

 Finally, suppose that $(x, y)$ falls under the third case in the definition of $\Psi_{t}$. 
 Then $v$ and $t$ are distinct. Moreover, since $1 \leq v \leq t$, we see that $1 \leq v \leq t-1$. 
 Consequently, the pair $(u, v)$ is in $\mathcal{R}_{t}$. 
 This allows us to restrict the codomain of $\Psi_t$, writing 
\begin{equation}\label{Psitcolon}
 \Psi_t\colon \Delta_t \to \mathcal{R}_{t}. 
\end{equation}

 We claim that the mapping in \eqref{Psitcolon} is injective, and this leads us to the following case analysis
 (where the cases referenced below refer to the ordering of the cases given by the definition of the piecewise
 function in \eqref{definePsit}). 

 \ 

\noindent {\bf Case 1:} The equality of expressions of the form $(t-1, t-2)$ 
 based on possible arguments of the form $(x, y) = (t, t)$ implies the equality of the corresponding arguments.

 \ 

\noindent {\bf Case 2:} Recalling the notational convention in \eqref{rewriteFtxy}, 
 transposed expressions of the form $(v, u)$ are mutually distinct for distinct arguments in the domain 
 in \eqref{Psitcolon}, by the injectivity of $F_t$. 

 \ 

\noindent {\bf Case 3:} Expressions of the form $\Psi_t(x, y) = F_t(x, y)$
 are mutually distinct for distinct arguments in the domain in \eqref{Psitcolon}, 
 again by the injectivity of $F_t$. 

 \ 

\noindent {\bf Cases 1 \& 2:} An expression of the form $(t-1, t-2)$ corresponding to the first case in 
 \eqref{definePsit} cannot be equal to an expression of the form $(t, u)$ corresponding to 
 the second case in \eqref{definePsit}. 

 \ 

\noindent {\bf Cases 2 \& 3:} We claim that no pair of the form $(t, u)$ given by the second case in 
 \eqref{definePsit} can equal a pair of the form $(u, v)$
 given by the final case in \eqref{definePsit}. 
 To prove this claim, suppose that $F_t(x, y)$ is such that its first coordinate is $t$.
 Thus, for some integer $w$, we have that 
\begin{equation}\label{assumetv} 
 F_{t}(x, y) = (t, w).
\end{equation} 
 We remain consistent with the definitions associated with $\delta$, $r$, and $s$
 given above. 
 By the definition of $F_t$, we have $F_t(x, y) = (r + (\delta - 1)s, s)$. 
 Comparing the entries of this tuple
 with \eqref{assumetv}, 
\begin{equation}\label{fromFteqs} 
 t = r + (\delta - 1) s. 
\end{equation}
 From \eqref{fromrleqs} and \eqref{fromFteqs}, 
 we have that 
\begin{equation}\label{yequalst}
 y = t
\end{equation}
 and $r + (\delta - 1) s = \delta s$, and this latter equality gives us, in turn, that $r = s$. 
 Moreover, the coprimality condition in 
 \eqref{gcdrs1} then gives us, as a consequence, that 
\begin{equation}\label{rs1} 
 r = s = 1, 
\end{equation}
 so that \eqref{assumetv} reduces (recalling \eqref{yequalst}) to 
\begin{equation}\label{changetv} 
 F_{t}(x, t) = (t, 1).
\end{equation} 
 In turn, a combined application of \eqref{xandy}, \eqref{yequalst}, and 
 \eqref{rs1} allows us to deduce that $x = t$, i.e., so that \eqref{changetv} reduces to 
\begin{equation}\label{Fttt}
 F_{t}(t, t) = (t, 1).
\end{equation} 
 However, the argument $(t, t)$ is given by the first case in the definition of $\Psi_t$, with $\Psi_{t}(t, t) = (t-1, t-2)$. So, the argument 
 $(t, t)$ does not yield a pair given by the third case in the definition of $\Psi_t$. We have shown that $(t, t)$ is the only argument such 
 that when $F_t$ is applied to it, this results in a pair with $t$ as the first entry. On the other hand, no pair arising from the third case in the 
 definition of $\Psi_t$ has $t$ as the first coordinate. Each pair arising from the second case has $t$ as the first entry, and we thus find 
 that no pair arising from the second case can equal a pair arising from the third case. 

 \ 

\noindent {\bf Cases 1 \& 3:} It remains to verify that no tuple of the form $(u, v)$ given by the final case of \eqref{definePsit} is of the 
 form $(t-1,t-2)$. By way of contradiction, suppose that 
\begin{equation}\label{equalsfirstcase} 
 F_{t}(x, y) = (t - 1, t - 2). 
\end{equation}
 The formula for $\delta$ in \eqref{quotientremainder}, together with the assumption in \eqref{equalsfirstcase}, gives us that $\delta 
 = 2$. Moreover, the formula for the remainder $r$ in \eqref{quotientremainder} and the assumption in \eqref{equalsfirstcase} give us that 
 $ r = 1$. Recalling \eqref{originalFt}, we also have that $s = t - 2$. From \eqref{xandy}, we also have that $y = 2 t - 4$. With the 
 assumption that $t \geq 5$, we find that $2t-4 > t$, i.e., that $y > t$, contradicting the final inequality in \eqref{defineDeltat}, i.e., 
 contradicting that $(x, y) \in \Delta_t$. 

 \ 

\noindent So, we have shown that $\Psi_t$ is injective. By then considering the cardinalities of the codomain and the domain in 
 \eqref{Psitcolon}, we obtain the desired result. 
\end{proof}

\begin{theorem}
 If $k \geq 20$, then Forcade and Pollington's conjecture holds. 
\end{theorem}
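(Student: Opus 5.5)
The plan is to show that the abelian subgroup $H := \langle \varphi(1), \varphi(2), \ldots, \varphi(t) \rangle$ of Lemma \ref{lemsubabelian} is all of $G$; then $G$ is abelian. The lever is a counting argument combined with Lagrange's theorem. By Lemma \ref{lemThetat}, $\Theta_t$ injects $\mathcal{R}_t$ into $H$, so $|H| \geq |\mathcal{R}_t|$. Once $t \geq 5$, Lemma \ref{lemtriangular} then gives $|H| \geq t(t+1)/2$. From \eqref{uniquepositive} we have $k < t(t+1)$, and hence
\begin{equation*}
|H| \geq \frac{t(t+1)}{2} > \frac{k}{2}.
\end{equation*}
Since $H$ is a subgroup of $G$ and $|G| = k$, Lagrange's theorem forces $|H|$ to divide $k$. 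The only divisor of $k$ exceeding $k/2$ is $k$ itself, so $H = G$, and $G$ is abelian by Lemma \ref{lemsubabelian}.

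The steps, in order, are as follows. First, check that $k \geq 20$ guarantees $t \geq 5$. From \eqref{floorsqrt} with $k = 20$ we get $t = \lfloor (1+\sqrt{81})/2 \rfloor = 5$. Since $t$ is nondecreasing in $k$, every $k \geq 20$ has $t \geq 5$; equivalently, $t(t+1) > k \geq 20$ rules out $t \leq 4$, because $4 \cdot 5 = 20$. Second, invoke Lemmas \ref{lemThetat} and \ref{lemtriangular} to obtain the lower bound on $|H|$. Third, use the strict inequality $k < t(t+1)$ from \eqref{uniquepositive} to get $|H| > k/2$. Fourth, apply the divisibility argument above.

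I expect no serious obstacle, since the heavy lifting (injectivity of $\Theta_t$ and the combinatorial bound on $|\mathcal{R}_t|$) is already done in the preceding lemmas. The only delicate points are bookkeeping. The threshold $k \geq 20$ must correspond exactly to $t \geq 5$, the hypothesis under which Lemma \ref{lemtriangular} holds. The inequality must also be strict, $|H| > k/2$ rather than $\geq$, to exclude an index-$2$ subgroup, and this strictness comes from the strict upper bound in \eqref{uniquepositive}. The cases $k < 20$ would be handled separately afterward, for instance by direct verification or by citing Hickerson's bound $|G| > 9696$ for a nonabelian example.
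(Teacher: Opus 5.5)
Your proposal is correct and follows the paper's proof essentially step for step: Lemmas \ref{lemThetat} and \ref{lemtriangular} give $|H| \geq |\mathcal{R}_t| \geq t(t+1)/2 > k/2$ via the strict bound in \eqref{uniquepositive}, and Lagrange's theorem then forces $H = G$, which is abelian by Lemma \ref{lemsubabelian}. Your check that $k \geq 20$ implies $t \geq 5$ (since $t \leq 4$ would give $t(t+1) \leq 20 \leq k$) spells out what the paper asserts in one line from \eqref{floorsqrt}.
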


\begin{proof}
 Assume that $k \geq 20$. The relation between $k$ and $t$, as in \eqref{floorsqrt}, then gives us that $t \geq 5$. From Lemma 
 \ref{lemThetat}, we find that 
\begin{equation}\label{boundsubgroup} 
 \left| \mathcal{R}_{t} \right| \leq \left| \langle \varphi(1), \varphi(2), \ldots, \varphi(t) \rangle \right|. 
\end{equation}
 By Lemma \ref{lemtriangular} (recalling that $t \geq 5$) and the relation in 
 \eqref{boundsubgroup}, we find that 
\begin{equation}\label{triangularbound} 
 \frac{t(t+1)}{2} \leq \left| \langle \varphi(1), \varphi(2), \ldots, \varphi(t) \rangle \right|. 
\end{equation}
 From \eqref{uniquepositive} and \eqref{triangularbound} together, we find that 
 $\frac{k}{2} < \big| \langle \varphi(1), \varphi(2), \ldots, \varphi(t) \rangle \big|$, 
 i.e., that 
\begin{equation}\label{contradictLagrange}
 \frac{|G|}{2} < \big| \langle \varphi(1), \varphi(2), \ldots, \varphi(t) \rangle \big|.
\end{equation}
 If $ \langle \varphi(1), \varphi(2), \ldots, \varphi(t) \rangle $ were a proper subgroup of $G$, then its index would be such that $\big[G : 
 \big| \langle \varphi(1), \varphi(2), \ldots, \varphi(t) \rangle \big| \big] \geq 2$, so that Lagranges theorem would thewn give us that 
 $ \big| \langle \varphi(1), \varphi(2), \ldots, \varphi(t) \rangle \big| \leq \frac{|G|}{2}$, contradicting \eqref{contradictLagrange}. We thus 
 have that $ \langle \varphi(1), \varphi(2), \ldots, \varphi(t) \rangle = G$. Consequently, by Lemma \ref{lemsubabelian}, 
 we have that $G$ is abelian. 
\end{proof}

\begin{theorem}
 Forcade and Pollington's conjecture holds in full generality. 
\end{theorem}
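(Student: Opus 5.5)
The previous theorem settles every $k \geq 20$, so the plan is to treat the finitely many cases $1 \leq k \leq 19$ separately. The case $k = 1$ is trivial and was already set aside. For the rest, I would first use the fact that every group of order $k$ is abelian unless $k \in \{6, 8, 10, 12, 14, 16, 18\}$. This follows from the classification of groups of small order: groups of order $p$ or $p^2$ are abelian, and so are groups of order $pq$ with $p < q$ and $p \nmid q-1$, which covers $15$. Odd $k$ needs no separate check; Chandler's result also covers odd orders, but the classification already suffices here. Only the seven even values remain.

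For these seven values of $k$, I would reuse the mechanism of the previous proof with sharper counting. Put $H = \langle \varphi(1), \ldots, \varphi(t) \rangle$. By Lemma~\ref{lemsubabelian}, $H$ is abelian, and by Lemma~\ref{lemThetat}, $|H| \geq |\mathcal{R}_t|$.

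The values of $t$ are as follows:
\begin{itemize}
\item $t = 3$ for $k \in \{6, 8, 10\}$, where $|\mathcal{R}_3| = 5$;
\item $t = 4$ for $k \in \{12, 14, 16, 18\}$, where $|\mathcal{R}_4| = 9$.
\end{itemize}
Hence $|H| > k/2$ in the cases $k = 6, 8$ (since $5 > 4$), $k = 12$, $14$, $16$ (since $9 > 8$), and even $k = 18$ fails narrowly ($9 = 9$). For each case with $|H| > k/2$, Lagrange's theorem forces $H = G$, so $G$ is abelian.

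The cases $k = 10$ and $k = 18$ are the main obstacle, and I would handle them by enlarging the generating set. I would use a subgroup $H'$ generated by $\varphi$ of a larger set $T$ whose elements still commute pairwise. A pair $i, j \in T$ commutes, by Lemma~\ref{lemlocal}, whenever $ij \leq k$. Pairs with $ij > k$ can still be shown to commute by writing one element as a product of images of small integers that commute with the other.

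For $k = 10$, the element $\varphi(4) = \varphi(2)^2$ already lies in $H$. Then $H$ contains the $10$ elements of the form $\varphi(a)\varphi(b)^{\pm 1}$ that are distinct by the injectivity argument, so one can try to show $|H| \geq 6 > 5$. Alternatively, one can show directly that $\varphi(5)$ commutes with $\varphi(2)$ and $\varphi(3)$ and that these generate $G$. A quick route would be to note that $\varphi(5)\varphi(2) = \varphi(10)$, which shows $\varphi(5)$ commutes with $\varphi(2)$. For $\varphi(3)$ and $\varphi(5)$, I would argue through the index: the centralizer of $\varphi(2)$ contains $\varphi(1), \ldots, \varphi(5)$ and $\varphi(8)$, so it has more than $5$ elements and is all of $G$. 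A similar centralizer-size argument should show that $\varphi(3)$ is central, making $G$ generated by central elements together with $\varphi(5), \varphi(7)$.

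For $k = 18$, I would run the same centralizer counting. The centralizer of $\varphi(2)$ contains $\varphi(j)$ for all $j \leq 9$ together with the products these generate, giving more than $9$ elements, so $\varphi(2)$ is central. The same argument should make $\varphi(3)$ central. Then $G/\langle \varphi(2), \varphi(3)\rangle$ is small enough that a direct check finishes the proof. The general principle I would apply throughout is that the centralizer of $\varphi(j)$ contains $\varphi(i)$ for all $i \leq k/j$ and is a subgroup, so once it exceeds $k/2$ elements, $\varphi(j)$ is central. Iterating this for small $j$ should make enough generators central to conclude in each residual case. This is where I expect the detailed bookkeeping to lie.
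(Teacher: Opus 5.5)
Your route is correct but genuinely different from the paper's. The paper never examines $k\le 19$. It cites Hickerson's bound (via Guy) that a non-abelian example must have $|G|>9696$, so the final theorem is a two-line corollary of the $k\ge 20$ case. You instead make the small cases self-contained. Standard facts about groups of order $p$, $p^2$ and $15$ leave $k\in\{6,8,10,12,14,16,18\}$. The exact counts $|\mathcal{R}_3|=5$ and $|\mathcal{R}_4|=9$ settle all of these except $k=10$ and $k=18$, and centralizer counting handles those two. This removes the dependence on an external result, at the price of a short case analysis; the paper's route is shorter but rests entirely on Hickerson for $k<20$.

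Two loose ends should be tidied.
\begin{enumerate}
\item Your aside that $H$ contains ten distinct elements $\varphi(a)\varphi(b)^{\pm1}$ when $k=10$ is not justified by the injectivity argument. For example, ruling out $\varphi(3)^2=\varphi(2)^{-1}$ would need $\varphi(9)\varphi(2)=\varphi(18)$, but $18>10$.
\item You leave the last step of both residual cases as ``should'' or ``a direct check.''
\end{enumerate}

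Both cases close immediately, without centralizers. Writing $\varphi(n)=\varphi(2)\varphi(n/2)$ or $\varphi(n)=\varphi(3)\varphi(n/3)$ and inducting, $H=\langle\varphi(2),\varphi(3)\rangle$ contains $\varphi(n)$ for every $3$-smooth $n\le k$.
\begin{itemize}
\item For $k=10$ this gives the seven distinct images of $1,2,3,4,6,8,9$, and $7>10/2$.
\item For $k=18$ it gives the ten distinct images of $1,2,3,4,6,8,9,12,16,18$, and $10>18/2$.
\end{itemize}
In both cases $H=G$ by Lagrange. Equivalently, once you have shown $\varphi(2)$ and $\varphi(3)$ are central, $Z(G)$ contains this subgroup and so equals $G$. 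There is no nontrivial quotient left to check, and no need to compare $\varphi(5)$ with $\varphi(7)$.
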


\begin{proof}
Suppose, by way of contradiction, that there exists a
counterexample $G$ to the Forcade--Pollington conjecture.
As reported by Guy \cite{Guy1989}, Hickerson proved that every
such counterexample satisfies
$
|G|>9696$. 
In particular, $|G|\geq20$. The preceding theorem therefore
implies that $G$ is abelian, contradicting the assumption that
$G$ is a counterexample. Hence no counterexample exists, and
the conjecture holds in full generality.
\end{proof}

\subsection*{Acknowledgements}
 The author acknowledges extensive interactions with GPT-5.6 Pro during the exploratory and proof-development stages of this work. All 
 AI-generated suggestions were substantially revised, corrected, and independently verified by the author, who assumes full responsibility 
 for the mathematical content.

\bibliographystyle{plain}
\bibliography{seprefe}

\end{document}